\numberwithin{equation}{section}
\newtheorem{Theorem}{Theorem}[section]
\newtheorem{Lemma}{Lemma}[section]
\newtheorem{Corollary}{Corollary}[section]
\theoremstyle{definition}
\theoremstyle{remark}
\newtheorem{Remark}{Remark}[section]
\newtheorem{Example}{Example}[section]
\newcommand{\essinf}{\mathop{\rm ess \, inf}\limits}
\newcommand{\mes}{\mathop{\rm mes}\nolimits}
\author{A.A. Kon'kov}
\address{Department of Differential Equations,
Faculty of Mechanics and Mathematics,
Mo\-s\-cow Lo\-mo\-no\-sov State University,
Vorobyovy Gory,
Moscow, 119992 Russia}
\email{konkov@mech.math.msu.su}
\author{A.E. Shishkov}
\address{
Center of Nonlinear Problems of Mathematical Physics,
RUDN University,
Miklukho-Maklaya str. 6,
Moscow, 117198 Russia;
Institute of Applied Mathematics and Mechanics of NAS of Ukraine,
Dobrovol'skogo str. 1, Slavyansk, 84116 Ukraine
}
\email{aeshkv@yahoo.com}
\thanks{The work of the second author is supported by RUDN University, Project 5-100}
\title[Generalization of the Keller-Osserman theorem]{Generalization of the Keller-Osserman theorem for higher order differential inequalities}
\keywords{Higher order differential inequalities; Nonlinearity; Blow-up}
\subjclass{35B44, 35B08, 35J30, 35J70}
\date{}
\begin{document}

\begin{abstract}
We obtain exact conditions guaranteeing that any global weak solution of the differential inequality 
$$
	\sum_{|\alpha| = m}
	\partial^\alpha
	a_\alpha (x, u)
	\ge
	g (|u|)
	\quad
	\mbox{in } {\mathbb R}^n
$$
is trivial, where $m, n \ge 1$ are integers and $a_\alpha$ and $g$ are some functions.

These conditions generalize the well-know Keller-Osserman condition.
\end{abstract}

\maketitle

\section{Introduction}

We study the differential inequality
\begin{equation}
	\sum_{|\alpha| = m}
	\partial^\alpha
	a_\alpha (x, u)
	\ge
	g (|u|)
	\quad
	\mbox{in } {\mathbb R}^n,
	\label{1.1}
\end{equation}
where $m, n \ge 1$ are integers and $a_\alpha$ are Caratheodory functions such that
$$
	|a_\alpha (x, \zeta)| 
	\le 
	A |\zeta|,
	\quad
	|\alpha| = m,
$$
with some constant $A > 0$ for almost all $x = {(x_1, \ldots, x_n)} \in {\mathbb R}^n$ 
and for all $\zeta \in {\mathbb R}$.
By $\alpha = {(\alpha_1, \ldots, \alpha_n)}$ we mean a multi-index with
$|\alpha| = \alpha_1 + \ldots + \alpha_n$ 
and
$
	\partial^\alpha 
	= 
	{\partial^{|\alpha|} / (\partial_{x_1}^{\alpha_1} \ldots \partial_{x_n}^{\alpha_n})}.
$
It is also assumed that $g$ is a non-decreasing convex function on the interval $[0, \infty)$
and, moreover, $g (\zeta) > 0$ for all $\zeta > 0$.

Let us denote by $B_r^x$ the open ball in ${\mathbb R}^n$ 
of radius $r > 0$ and center at $x$. In the case of $x = 0$, we write $B_r$ instead of $B_r^0$.

A function $u  \in {L_{1, loc} ({\mathbb R}^n)}$ is called a global weak solution of~\eqref{1.1} 
if ${g (|u|)} \in {L_{1, loc} ({\mathbb R}^n)}$ and
\begin{equation}
	\int_{{\mathbb R}^n}
	\sum_{|\alpha| = m}
	(-1)^m
	a_\alpha (x, u)
	\partial^\alpha
	\varphi
	\,
	dx
	\ge
	\int_{{\mathbb R}^n}
	g (|u|)
	\varphi
	\,
	dx
	\label{1.2}
\end{equation}
for any non-negative function $\varphi \in C_0^\infty ({\mathbb R}^n)$.

In their pioneering works~\cite{Keller, Osserman}, J.B.~Keller and R.~Osserman 
proved
that, under the condition
\begin{equation}
	\int_1^\infty
	\left(
		\int_1^\zeta
		g (\xi)
		\,
		d\xi
	\right)^{-1/2}
	\,
	d\zeta
	<
	\infty,
	\label{1.3}
\end{equation}
the elliptic inequality
\begin{equation}
	\Delta u \ge g (u)
	\quad
	\mbox {in } 
	{\mathbb R}^n
	\label{1.4}
\end{equation}
has no positive global solutions.
Since then, a lot of papers appeared on the absence of solutions for various differential equations and inequalities. 
In so doing,
for the general nonlinearity, 
most of these papers 
dealt with second order differential operators~\cite{FPR2008}--\cite{Veron}.
In the case of higher order operators, almost all studies were limited to the Emden-Fowler nonlinearity $g (t) = t^\lambda$~\cite{KS}--\cite{MPbook}.

In our paper, we obtain exact conditions guaranteeing that any global weak solution of inequality~\eqref{1.1} is trivial or, in other words, is equal to zero almost everywhere in ${\mathbb R}^n$.
For inequalities of the form~\eqref{1.4}, these conditions are equivalent to the Keller-Osserman condition~\eqref{1.3}.

\section{Main results}

\begin{Theorem}\label{T2.1}
Let
\begin{equation}
	\int_1^\infty
	g^{- 1 / m} (\zeta)
	\zeta^{1 / m - 1}
	\,
	d\zeta
	<
	\infty
	\label{T2.1.1}
\end{equation}
and
\begin{equation}
	\liminf_{t \to +0} G^{n - m} (t) t < \infty,
	\label{T2.2.1}
\end{equation}
where 
\begin{equation}
	G (t)
	=
	\int_t^\infty
	g^{- 1 / m} (\zeta)
	\zeta^{1 / m - 1}
	\,
	d\zeta.
	\label{T2.1.3}
\end{equation}
Then any global weak solution of~\eqref{1.1} is trivial.
\end{Theorem}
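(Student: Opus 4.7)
The strategy is the test function method, adapted to the general convex nonlinearity $g$. In the weak formulation \eqref{1.2} I would substitute $\varphi = \psi^k$, where $\psi$ is a standard radial cutoff supported in $B_{2R}$, equal to $1$ on $B_R$, $0 \le \psi \le 1$, with $|\partial^\beta \psi| \le C R^{-|\beta|}$ for $|\beta|\le m$, and $k \ge m$ chosen large enough that $|\partial^\alpha \varphi| \le C R^{-m} \psi^{k-m}$ for every multi-index with $|\alpha| = m$. Combined with the growth bound $|a_\alpha(x, u)| \le A|u|$, this yields the basic a priori estimate
\[
\int g(|u|)\, \psi^k\, dx \;\le\; A\, C\, R^{-m} \int |u|\, \psi^{k-m}\, dx.
\]

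The second step is a Young-type inequality specifically tailored to $g$. For a free level $t > 0$, I would use convexity of $g$ (the monotonicity of $\zeta \mapsto g(\zeta)/\zeta$) to derive a pointwise bound of the form $|u| \le \delta\, g(|u|)\, \Phi(t) + C_\delta\, t$, where $\Phi$ is built so as to produce the weight $g^{-1/m}(\zeta)\zeta^{1/m-1}$ appearing in \eqref{T2.1.3}. Substituting and choosing $\delta$ small enough to absorb the resulting $\int g(|u|)\psi^k\, dx$ term back into the left-hand side, together with $\int \psi^{k-m}\,dx \lesssim R^n$, yields an inequality of the schematic form
\[
\int_{B_R} g(|u|)\, dx \;\le\; C_1\, t\, R^{n} \;+\; C_2\, R^{n - m}\, G(t)^{m},
\]
in which the coupling between the cutoff scale $R$ and the truncation level $t$ becomes explicit.

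The conclusion now follows by selecting the scales through \eqref{T2.1.1} and \eqref{T2.2.1}. Condition \eqref{T2.1.1} ensures $G(t) < \infty$ for every $t > 0$, so the preceding estimate is well-defined and $G(t) \to 0$ as $t \to \infty$. Condition \eqref{T2.2.1} supplies a sequence $t_k \to 0^+$ along which $G^{n-m}(t_k)\, t_k \le K$. Setting $R_k \asymp G(t_k)$, so that $B_{R_k} \uparrow {\mathbb R}^n$, both summands on the right remain uniformly $O(K)$, giving $\int_{{\mathbb R}^n} g(|u|)\, dx < \infty$. A second application of the same absorption inequality on concentric balls, or equivalently an iteration in $(t, R)$ with $t \to 0$ along the same sequence, then drives the right-hand side to zero, forcing $g(|u|) = 0$ a.e.; since $g > 0$ on $(0, \infty)$ this gives $u \equiv 0$ a.e.

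The technical heart---and the step I expect to be most delicate---is the Young-type inequality in step two. For the model case $g(\zeta) = \zeta^\lambda$ it reduces to classical Young with conjugate exponents $(\lambda, \lambda')$ and produces the familiar decay $R^{n - m\lambda'}$, triviality following whenever $\lambda < n/(n-m)$; in the general convex setting the weight $g^{-1/m}(\zeta)\zeta^{1/m-1}$ from \eqref{T2.1.3} must be extracted from convexity and matched against the $R^{-m}$ scaling of the cutoff. Handling the critical case of \eqref{T2.2.1}---where $G^{n-m}(t)\, t$ is only \emph{bounded} along a sequence rather than tending to zero---is precisely why a single absorption is not enough and an iteration on nested balls is required to close the argument.
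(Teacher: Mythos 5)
There is a genuine gap, and it sits exactly where you flagged it: the ``Young-type inequality tailored to $g$'' is never derived, and neither its plausibility nor its sufficiency holds up. A single pointwise truncation/convexity step at one level $t$ only sees $g$ near that level: convexity gives $|u|\le t+\frac{t}{g(t)}\,g(|u|)$ on $\{|u|>t\}$, which after absorption produces a bound of the form $\int_{B_R}g(|u|)\,dx\lesssim t\,R^{n-m}$ only for $R^m\lesssim t/g(t)$ -- and $t/g(t)$ need not tend to infinity as $t\to+0$ (for the borderline nonlinearity $g(\zeta)=\zeta\ln^\nu(2+\zeta)$ of Example~\ref{E2.2} it stays bounded), so such a one-shot argument yields nothing precisely in the critical regime the theorem is designed to cover. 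The function $G$ in \eqref{T2.1.3} aggregates information about $g$ over all scales $\zeta\ge t$, and extracting the weight $g^{-1/m}(\zeta)\zeta^{1/m-1}$ requires a multi-scale device, not a pointwise inequality. This is what the paper supplies: Lemmas~\ref{L3.2}--\ref{L3.3} run an iteration over nested radii $r_i\in[r,2r]$ (doubling the averaged mass $J_r$ at each step), leading to a dichotomy between $\int d\zeta/g(\zeta/2)\ge Cr^m$ and $\int g^{-1/m}(\zeta/2)\zeta^{1/m-1}d\zeta\ge Cr$, which Lemma~\ref{L3.4} merges into the a priori bound $\frac1{r^n}\int_{B_r}|u|\,dx\le CG^{-1}(kr)$ (Theorem~\ref{T2.4}); this bound, equivalently $\int_{B_{cG(t)}}g(|u|)\,dx\le C\,t\,G^{n-m}(t)$ after Lemma~\ref{L3.1}, is what \eqref{T2.2.1} can be tested against.

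Moreover, even if your schematic inequality $\int_{B_R}g(|u|)\,dx\le C_1 tR^{n}+C_2R^{n-m}G^m(t)$ were granted, your choice of scales does not close the argument. With $R_k\asymp G(t_k)$ and $t_kG^{n-m}(t_k)\le K$ the two summands are of order $K\,G^m(t_k)$ and $G^{n}(t_k)$ respectively, and $G(t_k)\to\infty$ as $t_k\to+0$ in the only case that matters (if $G(0)<\infty$, Theorem~\ref{T2.3} already rules out all solutions; see Remark~\ref{R2.1}), so the right-hand side blows up rather than staying $O(K)$ -- note the correct first term must be $t\,R^{n-m}$, not $t\,R^{n}$. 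Finally, the concluding ``second absorption / iteration in $(t,R)$'' is not spelled out; in the paper the passage from $\int_{\mathbb R^n}g(|u|)\,dx<\infty$ to $u\equiv 0$ (in the case $n>m$) is a separate argument combining the annulus estimate \eqref{PT2.1.1}, Jensen's inequality on the annuli $B_r\setminus B_{r/2}$, and a second use of \eqref{T2.2.1} via \eqref{PT2.1.5}, and this step needs to be supplied explicitly in any complete proof.
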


\begin{Theorem}\label{T2.3}
Let
\begin{equation}
	\int_0^\infty
	g^{- 1 / m} (\zeta)
	\zeta^{1 / m - 1}
	\,
	d\zeta
	<
	\infty.
	\label{T2.3.1}
\end{equation}
Then~\eqref{1.1} has no global weak solutions.
\end{Theorem}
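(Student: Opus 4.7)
The plan is to deduce Theorem~\ref{T2.3} from Theorem~\ref{T2.1} together with a short argument ruling out $u \equiv 0$ as a weak solution. First, I would check that~\eqref{T2.3.1} implies both hypotheses of Theorem~\ref{T2.1}. Since $\int_1^\infty \le \int_0^\infty$, condition~\eqref{T2.1.1} is immediate. For~\eqref{T2.2.1}, note that as $t \to +0$, $G(t)$ increases monotonically to $G(0^+) = \int_0^\infty g^{-1/m}(\zeta) \zeta^{1/m - 1}\,d\zeta$, which is finite and positive under~\eqref{T2.3.1}. Therefore $G^{n - m}(t) \cdot t \to 0$ as $t \to +0$ regardless of the sign of $n - m$, so the $\liminf$ in~\eqref{T2.2.1} is certainly finite. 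Theorem~\ref{T2.1} then forces every global weak solution of~\eqref{1.1} to satisfy $u = 0$ almost everywhere in ${\mathbb R}^n$.

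The next step is to rule out the zero function as a weak solution. The growth hypothesis $|a_\alpha(x, \zeta)| \le A |\zeta|$ gives $a_\alpha(x, 0) = 0$ for almost every $x$, so if $u \equiv 0$ then~\eqref{1.2} collapses to
\[
	0 \ge g(0) \int_{{\mathbb R}^n} \varphi \, dx
\]
for every non-negative $\varphi \in C_0^\infty ({\mathbb R}^n)$. Choosing $\varphi \not\equiv 0$ forces $g(0) \le 0$, hence $g(0) = 0$.

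The remaining step, and the one I expect to carry the real content of the argument, is to show that~\eqref{T2.3.1} is incompatible with $g(0) = 0$. This is where convexity of $g$ enters. Assuming $g(0) = 0$, the chord inequality between the endpoints $0$ and $1$ yields $g(\zeta) \le \zeta g(1)$ for $\zeta \in [0, 1]$, so
\[
	g^{-1/m}(\zeta) \zeta^{1/m - 1}
	\ge
	g(1)^{- 1/m} \zeta^{-1}
	\quad \text{for } \zeta \in (0, 1],
\]
and the integral in~\eqref{T2.3.1} diverges at the origin, contradicting the hypothesis. Consequently $g(0) > 0$, which combined with the previous paragraph yields the desired contradiction. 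Once one observes that~\eqref{T2.3.1} upgrades the conclusion of Theorem~\ref{T2.1} from triviality to non-existence via the forced strict positivity of $g(0)$, no further delicate estimate is needed.
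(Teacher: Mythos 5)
Your reduction to Theorem~\ref{T2.1} is circular in the context of this paper. The paper's proof of Theorem~\ref{T2.1} begins by invoking Theorem~\ref{T2.3}: assuming a global weak solution exists, Theorem~\ref{T2.3} forces~\eqref{R2.1.1} (i.e., the failure of~\eqref{T2.3.1}), and this is precisely what gives $g(0)=0$, makes $G^{-1}$ defined on all of $(0,\infty)$, and makes the estimate~\eqref{T2.4.1} of Theorem~\ref{T2.4} (also used there) meaningful. In particular, in the very regime you need --- \eqref{T2.3.1} in force, so $G(0^+)<\infty$ --- the triviality of solutions asserted by Theorem~\ref{T2.1} is obtained in the paper only through Theorem~\ref{T2.3} itself. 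So quoting Theorem~\ref{T2.1} to prove Theorem~\ref{T2.3} assumes what is to be proved, unless you supply an independent proof of Theorem~\ref{T2.1} valid when $G(0^+)<\infty$, which your proposal does not do. Your verification of~\eqref{T2.1.1} and~\eqref{T2.2.1} is correct but does not remove this circularity.

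The remaining ingredients of your argument are correct and are exactly what the paper uses: the chord inequality $g(\zeta)\le\zeta g(1)$ on $[0,1]$ showing that~\eqref{T2.3.1} forces $g(0)>0$ is the content of Remark~\ref{R2.1}, and the observation that $g(0)>0$ rules out vanishing of a solution (in fact on any nonempty open set) is the first step of the paper's proof. What is missing is the step that excludes \emph{nontrivial} solutions without Theorem~\ref{T2.1}; the paper gets it from Lemma~\ref{L3.5}. Since $g(0)>0$ guarantees~\eqref{L3.3.1} for every $r>0$, Lemma~\ref{L3.5} yields $\int_{J_r(r)}^{\infty} g^{-1/m}(\zeta/4)\,\zeta^{1/m-1}\,d\zeta \ge C r$ for all $r>0$; but under~\eqref{T2.3.1} the left-hand side is bounded above by $4^{1/m}\int_0^{\infty} g^{-1/m}(\zeta)\,\zeta^{1/m-1}\,d\zeta<\infty$ uniformly in $r$, so letting $r\to\infty$ gives the contradiction directly. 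Replacing your appeal to Theorem~\ref{T2.1} by this application of Lemma~\ref{L3.5} repairs the proof and recovers the paper's argument.
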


\begin{Theorem}\label{T2.4}
Let~\eqref{T2.1.1} be valid, then any global weak solution of~\eqref{1.1} satisfies the estimate
\begin{equation}
	\frac{1}{r^n}
	\int_{B_r}
	|u|
	\,
	dx
	\le
	C G^{-1} (k r)
	\label{T2.4.1}
\end{equation}
for all $r > 0$, where $G^{-1}$ is the inverse function to~\eqref{T2.1.3} 
and the constants $C > 0$ and $k > 0$ depend only on $A$, $m$, and $n$.
\end{Theorem}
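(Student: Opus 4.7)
My plan is to combine the test-function method with Jensen's inequality to derive a mean-value inequality, then to iterate it along a dyadic sequence of scales and extract the decay bound~\eqref{T2.4.1} by comparison with $G$. Fix $r>0$ and choose a non-negative cut-off $\varphi\in C^\infty_0(B_{2r})$ with $\varphi\equiv 1$ on $B_r$ and $|\partial^\alpha\varphi|\le Cr^{-m}$ for every $|\alpha|=m$. Testing~\eqref{1.2} against $\varphi$ and using $|a_\alpha(x,\zeta)|\le A|\zeta|$ gives $\int g(|u|)\varphi\,dx\le C_1 r^{-m}\int_{B_{2r}}|u|\,dx$; Jensen's inequality on $B_r$, valid by convexity and monotonicity of $g$, then yields the mean-value estimate $(\ast)$:
\[ g\bigl(I(r)\bigr)\le C_2\,r^{-m}\,I(2r),\qquad I(r):=|B_r|^{-1}\int_{B_r}|u|\,dx, \]
with $C_2$ depending only on $A,m,n$.

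Next I would iterate $(\ast)$ along the dyadic sequence $r_j:=2^j r$ to obtain the recurrence $I(r_{j+1})\ge C_2^{-1}r_j^m g(I(r_j))$. Arguing by contradiction, assume $I(r)>C_0 G^{-1}(k_0 r)$ for some $r>0$, where $C_0,k_0$ are constants depending only on $A,m,n$ (large $C_0$, small $k_0$) to be fixed later. Using the identity $G'(t)=-g^{-1/m}(t)t^{1/m-1}$, I would verify that the threshold function $\Theta(r):=C_0 G^{-1}(k_0 r)$ is a strict super-solution of $(\ast)$, so that the strict inequality $I(r_j)>\Theta(r_j)$ propagates to every $j\ge 0$. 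The iterated lower-bound sequence $t_j$ determined by $t_0:=I(r)$ and $t_{j+1}:=C_2^{-1}r_j^m g(t_j)$ then satisfies $I(r_j)\ge t_j$, and --- because~\eqref{T2.1.1} forces $g(\zeta)/\zeta\to\infty$ as $\zeta\to\infty$ --- $t_j\to\infty$.

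In the final step I would convert the iteration into a telescoping lower bound on the increments
\[ G(t_j)-G(t_{j+1})=\int_{t_j}^{t_{j+1}} g^{-1/m}(\zeta)\,\zeta^{1/m-1}\,d\zeta. \]
The identity $r_j=C_2^{1/m}\,t_{j+1}^{1/m}\,g^{-1/m}(t_j)$ coming from the recurrence, combined with the propagated bound $t_j>\Theta(r_j)$ --- which pins the ratio $g(t_j)/g(t_{j+1})$ along the iteration --- yields $G(t_j)-G(t_{j+1})\ge c\,r_j$ for some fixed $c>0$ depending only on $A,m,n$. Summing over $j$ and passing $N\to\infty$ forces $G(t_0)\ge c\,r\sum_{j=0}^\infty 2^j=\infty$, contradicting the finiteness of $G(t_0)$ guaranteed by~\eqref{T2.1.1}. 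Hence $I(r)\le\Theta(r)$, which is~\eqref{T2.4.1}.

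The chief obstacle is this last telescoping bound: a naive use of the monotonicity of $g^{-1/m}(\zeta)\zeta^{1/m-1}$ only gives an estimate of order $g^{-1/m}(t_{j+1})\,t_{j+1}^{1/m}$ for $G(t_j)-G(t_{j+1})$, which is too small to dominate $r_j$ as soon as $g(t_{j+1})/g(t_j)$ grows. The super-solution property of $\Theta$ from the propagation step is exactly what is needed to control this ratio, and the careful choice of $C_0,k_0$ --- so that $\Theta$ is a genuine super-solution of $(\ast)$ --- is where the full strength of the Keller-Osserman-type hypothesis~\eqref{T2.1.1} enters.
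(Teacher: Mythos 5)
Your opening step is fine and matches the paper's Lemmas~\ref{L3.1}--\ref{L3.2} (test function plus Jensen), but the argument breaks down exactly at the place you flag as ``the chief obstacle'', and the mechanism you propose there does not work. From the recurrence $t_{j+1}=C_2^{-1}r_j^m g(t_j)$ one has $r_j=C_2^{1/m}t_{j+1}^{1/m}g^{-1/m}(t_j)$, so the increment satisfies
\begin{equation*}
G(t_j)-G(t_{j+1})=\int_{t_j}^{t_{j+1}}g^{-1/m}(\zeta)\zeta^{1/m-1}\,d\zeta
\le m\,g^{-1/m}(t_j)\,t_{j+1}^{1/m}=mC_2^{-1/m}r_j ,
\end{equation*}
and a matching lower bound of order $r_j$ requires $g(t_{j+1})\le K\,g(t_j)$ with $K$ depending only on $A,m,n$. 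The propagated inequality $t_j>\Theta(r_j)$ is a \emph{lower} bound on $t_j$; it cannot cap the ratio $g(t_{j+1})/g(t_j)$ from above. On the contrary, combined with the recurrence it pushes the ratio the wrong way: $t_{j+1}/t_j=C_2^{-1}r_j^m g(t_j)/t_j$, and using $G(t)\ge 2^{1/m-1}g^{-1/m}(2t)t^{1/m}$ (the same elementary bound the paper uses in the proof of Theorem~\ref{T2.1}), the bound $t_j>C_0G^{-1}(k_0r_j)$ forces $r_j^m g(t_j)/t_j$ to be \emph{large} (of order $k_0^{-m}$, growing with $C_0$), with nothing bounding it above. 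Already for $g(\zeta)=\zeta^\lambda$ the ratio is $(t_{j+1}/t_j)^\lambda$, so any constant $c$ you could extract degenerates with $\lambda$, whereas the theorem's constants must depend only on $A,m,n$; for faster-growing admissible $g$ the ratio is unbounded along the iteration. Consequently the increments $G(t_j)-G(t_{j+1})$ may shrink geometrically while $r_j$ grows geometrically, the telescoping sum only returns the trivial fact $G(t_0)\ge\sum_j\bigl(G(t_j)-G(t_{j+1})\bigr)$, and no contradiction with the finiteness of $G(t_0)$ is produced. Note also that your scheme would already be done after the single step $G(t_0)-G(t_1)\ge cr$ (that inequality \emph{is} the theorem, since $G$ is decreasing), so the dyadic iteration to infinity adds nothing: the whole difficulty sits in the one estimate you have not proved.

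The paper resolves precisely this difficulty by a different decomposition. Instead of stepping dyadically in the radius and keeping one number per scale, it stays inside the single annulus $B_{2r}\setminus B_r$ and subdivides $[r,2r]$ at radii where the averaged mass $J_r$ \emph{doubles} (Lemma~\ref{L3.3}): stepping in the value rather than in the radius makes $J_r(r_{i+1})=2J_r(r_i)$, so the factor $g(\zeta/2)$ on each value-interval is automatically comparable to $g(J_r(r_i))$ and the telescoping in $\zeta$ goes through with universal constants, yielding \eqref{L3.3.3}. The complementary case, in which $J_r$ fails to double before the radius reaches $2r$, produces the bound \eqref{L3.3.2} involving $\int d\zeta/g(\zeta/2)$, and converting that into the Keller--Osserman integral requires the nontrivial Lemma~\ref{L3.4} from \cite{meIzv}; Lemma~\ref{L3.5} then gives \eqref{T2.4.1} directly, with no contradiction argument and no comparison function $\Theta$. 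Both of these devices --- the value-doubling partition that controls the $g$-ratios by construction, and the dichotomy handled via Lemma~\ref{L3.4} --- are absent from your proposal, and without them the key per-step lower bound cannot be justified.
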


The proof of Theorems~\ref{T2.1}--\ref{T2.4} is given in Section~\ref{proof}.

\begin{Remark}\label{R2.1}
If~\eqref{T2.1.1} holds and inequality~\eqref{1.1} has a global weak solution, 
then in accordance with Theorem~\ref{T2.3} we obviously have
\begin{equation}
	\lim_{t \to +0} G (t) = \infty.
	\label{R2.1.1}
\end{equation}
Thus, the right-hand side of~\eqref{T2.4.1} is defined for all $r > 0$.
Since $g$ is a a non-decreasing convex function on $[0, \infty)$, 
condition~\eqref{T2.3.1} implies that $g (0) > 0$.
\end{Remark}

\begin{Theorem}\label{T2.5}
Let~\eqref{T2.1.1} be valid, then
\begin{equation}
	\lim_{r \to \infty}
	\frac{1}{r^n}
	\int_{B_r}
	|u|
	\,
	dx
	=
	0
	\label{T2.5.1}
\end{equation}
for any global weak solution of inequality~\eqref{1.1}.
\end{Theorem}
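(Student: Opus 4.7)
The plan is to derive Theorem~\ref{T2.5} as a direct consequence of the quantitative estimate supplied by Theorem~\ref{T2.4}, turning the problem into one of analyzing the behavior of $G^{-1}$ at infinity. If $u$ is a global weak solution of~\eqref{1.1}, then Theorem~\ref{T2.4} immediately yields
$$
        \frac{1}{r^n}
        \int_{B_r}
        |u|
        \,
        dx
        \le
        C\, G^{-1}(kr)
$$
for every $r > 0$, with constants $C, k > 0$ depending only on $A$, $m$, $n$. So it suffices to prove that $G^{-1}(kr) \to 0$ as $r \to \infty$.

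First I would justify that $G^{-1}$ is even defined on a neighborhood of $+\infty$. Since a global weak solution exists by assumption, Remark~\ref{R2.1} gives $\lim_{t \to +0} G(t) = \infty$; combined with~\eqref{T2.1.1}, which guarantees that the improper integral defining $G$ converges at infinity, we see that $G\colon (0,\infty) \to (0,\infty)$ is a continuous strictly decreasing bijection. Hence $G^{-1}\colon (0,\infty) \to (0,\infty)$ is itself a continuous strictly decreasing bijection.

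Next I would compute the behavior of $G$ at $+\infty$: the convergence of the integral in~\eqref{T2.1.1} means that $G(t) = \int_t^\infty g^{-1/m}(\zeta) \zeta^{1/m - 1}\, d\zeta \to 0$ as $t \to \infty$. Transposed through $G^{-1}$, this says $G^{-1}(y) \to 0$ as $y \to \infty$. Applied to $y = kr$ with $r \to \infty$, we conclude $G^{-1}(kr) \to 0$, and the desired limit~\eqref{T2.5.1} follows from the estimate above.

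There is no real obstacle here beyond verifying that $G^{-1}$ is defined on an unbounded interval, which is the delicate point already handled by Remark~\ref{R2.1}; the case of the trivial solution $u \equiv 0$ makes~\eqref{T2.5.1} automatic, so the argument covers all possibilities allowed by the hypotheses.
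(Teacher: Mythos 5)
Your proposal is correct and follows essentially the same route as the paper: the paper also deduces Theorem~\ref{T2.5} directly from estimate~\eqref{T2.4.1} of Theorem~\ref{T2.4} together with the fact that $G^{-1}(r) \to 0$ as $r \to \infty$. Your additional verification that $G^{-1}$ is defined on all of $(0,\infty)$ is exactly the point the paper delegates to Remark~\ref{R2.1}, so nothing is missing.
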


\begin{Theorem}\label{T2.6}
Let~\eqref{T2.1.1} be valid and, moreover, $m \ge n$. 
Then any global weak solution of~\eqref{1.1} is trivial.
\end{Theorem}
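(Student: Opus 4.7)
The plan is to derive Theorem~\ref{T2.6} as a direct corollary of Theorem~\ref{T2.1} by verifying that, under the new hypothesis $m\ge n$, condition~\eqref{T2.2.1} comes for free once a global weak solution is known to exist. Let $u$ be a global weak solution of~\eqref{1.1}; the goal is to show $u\equiv 0$ almost everywhere.

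First, I would invoke Remark~\ref{R2.1} to obtain $\lim_{t\to+0}G(t)=\infty$. This is not assumed but forced by the hypotheses: the integrand in~\eqref{T2.1.3} is non-negative, so $G$ is monotone non-increasing on $(0,\infty)$, and $\lim_{t\to+0}G(t)=\int_0^\infty g^{-1/m}(\zeta)\zeta^{1/m-1}\,d\zeta$. Were this limit finite, condition~\eqref{T2.3.1} would hold and Theorem~\ref{T2.3} would rule out the existence of $u$, contradicting the assumption.

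Second, I would exploit $m\ge n$ to show $\liminf_{t\to+0}G^{n-m}(t)\,t=0$. If $m=n$, then $G^{n-m}(t)\equiv 1$ and $G^{n-m}(t)\,t=t\to 0$ as $t\to+0$. If $m>n$, then $G^{n-m}(t)=G(t)^{-(m-n)}\to 0$ as $t\to+0$, since $G(t)\to\infty$; hence $G^{n-m}(t)\,t\to 0$ in this case too. Either way, condition~\eqref{T2.2.1} is satisfied trivially, so Theorem~\ref{T2.1} applies and yields $u\equiv 0$ almost everywhere in $\mathbb{R}^n$. There is no genuine obstacle here: the hypothesis $m\ge n$ simply places us in the regime where~\eqref{T2.2.1} is automatic whenever~\eqref{T2.1.1} holds and a solution exists, so all the analytic content resides in Theorems~\ref{T2.1} and~\ref{T2.3}.
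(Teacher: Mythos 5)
Your proposal is correct and matches the paper's (one-line) proof, which likewise deduces Theorem~\ref{T2.6} from Theorem~\ref{T2.1} on the grounds that~\eqref{T2.2.1} is automatic when $m\ge n$. Your detour through Theorem~\ref{T2.3} to get $G(t)\to\infty$ is harmless but not even needed: since $G$ is non-increasing and $0<G(1)<\infty$ by~\eqref{T2.1.1}, for $t\le 1$ one has $G^{n-m}(t)\,t\le G^{n-m}(1)\,t\to 0$, so~\eqref{T2.2.1} holds in all cases $m\ge n$.
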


\begin{proof}[Proof of Theorems~$\ref{T2.5}$ and~$\ref{T2.6}$]
Since $G^{-1} (r) \to 0$ as $r \to \infty$, relation~\eqref{T2.5.1} readily follows from estimate~\eqref{T2.4.1} of Theorem~\ref{T2.4}.
In turn, to prove Theorem~\ref{T2.6}, it is sufficient to use Theorem~\ref{T2.1}. 
\end{proof}

\begin{Remark}\label{R2.2}
In the case of $m = 2$, condition~\eqref{T2.1.1} takes the form
\begin{equation}
	\int_1^\infty
	(g (\zeta) \zeta)^{- 1 / 2}
	\,
	d\zeta
	<
	\infty.
	\label{R2.2.1}
\end{equation}
It does not present any particular problem to verify that~\eqref{R2.2.1} is equivalent to 
the well-known Keller-Osserman condition~\eqref{1.3}. 
Really, taking into account the fact that $g$ is a non-decreasing positive function on 
the interval $(0, \infty)$, we obtain
$$
	\int_1^\zeta
	g (\xi)
	\,
	d\xi
	\ge
	\int_{\zeta / 2}^\zeta
	g (\xi)
	\,
	d\xi
	\ge
	\frac{\zeta}{2}
	g 
	\left(
		\frac{\zeta}{2}
	\right),
	\quad
	\zeta \ge 2.
$$
Hence,~\eqref{R2.2.1} implies~\eqref{1.3}.
At the same time,
$$
	\int_1^\zeta
	g (\xi)
	\,
	d\xi
	\le
	\zeta g (\zeta),
	\quad
	\zeta \ge 1;
$$
therefore,~\eqref{R2.2.1} follows from~\eqref{1.3}.
\end{Remark}

\begin{Corollary}[Keller-Osserman]\label{C2.1}
Suppose that~\eqref{R2.2.1} is valid, then any non-negative global weak solution of~\eqref{1.4} 
is trivial.
\end{Corollary}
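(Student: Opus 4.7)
The plan is to recognize~\eqref{1.4} as the case $m=2$ of the general inequality~\eqref{1.1}, apply Theorem~\ref{T2.5} to obtain vanishing of the spatial averages of $u$, and then exploit subharmonicity to upgrade this averaged decay to pointwise triviality.

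The first step embeds~\eqref{1.4} into the framework of~\eqref{1.1}. Taking $m=2$ with $a_\alpha(x,\zeta) = \zeta$ whenever $\alpha_i = 2$ for some single index $i$ and $a_\alpha \equiv 0$ otherwise, one has $\sum_{|\alpha|=2}\partial^\alpha a_\alpha(x,u) = \Delta u$ together with $|a_\alpha(x,\zeta)|\le |\zeta|$, so the structural hypotheses of~\eqref{1.1} hold with $A=1$. By Remark~\ref{R2.2}, condition~\eqref{R2.2.1} is equivalent to the $m=2$ instance of~\eqref{T2.1.1}, and hence Theorem~\ref{T2.5} applies to any non-negative global weak solution $u$ of~\eqref{1.4} to yield
$$
\lim_{r\to\infty}\frac{1}{r^n}\int_{B_r} u\,dx = 0.
$$

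The second step promotes this to $u\equiv 0$ via subharmonicity. Since $g$ is non-decreasing with $g(\zeta)>0$ for $\zeta>0$, a short convexity/monotonicity argument yields $g(0)\ge 0$, so that $g(u)\ge 0$ pointwise. Consequently the weak inequality $\Delta u \ge g(u) \ge 0$ exhibits $u \in L_{1, loc}({\mathbb R}^n)$ as weakly subharmonic on ${\mathbb R}^n$. Passing to the subharmonic (upper semicontinuous) representative, the classical mean value inequality gives, for every $x\in{\mathbb R}^n$ and every $r>0$,
$$
u(x) \le \frac{1}{|B_r^x|}\int_{B_r^x} u\,dy \le \frac{(r+|x|)^n}{r^n}\cdot \frac{1}{|B_{r+|x|}|}\int_{B_{r+|x|}} u\,dy,
$$
and the right-hand side tends to $0$ as $r\to\infty$ by the first step. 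Hence $u(x)\le 0$ for every $x$, which combined with $u\ge 0$ forces $u\equiv 0$.

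The only hurdle I anticipate is a brief technical one: invoking the standard potential-theoretic fact that a weak $L_{1, loc}$ subsolution of Laplace's equation admits a subharmonic representative satisfying the pointwise mean value inequality, and verifying $g(0)\ge 0$ so that subharmonicity holds on the full set $\{u=0\}$ as well as on $\{u>0\}$. Once these points are granted, the substantive input is exactly Theorem~\ref{T2.5}, and the conclusion is a single application of the mean value inequality.
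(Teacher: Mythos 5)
Your proposal is correct and follows essentially the same route as the paper: the paper likewise observes that $u$ satisfies the submean-value property (being a non-negative weak solution of $\Delta u \ge g(u) \ge 0$) and then concludes $u = 0$ almost everywhere from the vanishing of the averages given by Theorem~\ref{T2.5}. Your extra steps (the explicit embedding of~\eqref{1.4} into~\eqref{1.1} with $m=2$, the appeal to Remark~\ref{R2.2}, and the comparison of balls $B_r^x \subset B_{r+|x|}$) merely spell out details the paper leaves implicit.
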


\begin{proof}
Let $u$ be a non-negative global weak solution of~\eqref{1.4}.
By the submean-value property, we have
$$
	u (x) 
	\le 
	\frac{
		1
	}{
		|B_r|
	}
	\int_{B_r^x}
	u
	\,
	dy
$$
for all $r > 0$ and for almost all $x \in {\mathbb R}^n$, 
where $|B_r|$ is the $n$-dimensional volume of the ball $B_r$,
whence in accordance with Theorem~\ref{T2.5} it follows that $u = 0$ almost everywhere 
in ${\mathbb R}^n$.
\end{proof}

\begin{Example}\label{E2.1}
Consider the inequality 
\begin{equation}
	\sum_{|\alpha| = m}
	\partial^\alpha
	a_\alpha (x, u)
	\ge
	c_0 |u|^\lambda
	\quad
	\mbox{in } {\mathbb R}^n,
	\quad
	c_0 = const > 0.
	\label{E2.1.1}
\end{equation}
By Theorem~\ref{T2.1}, the conditions 
$$
	\lambda > 1
	\quad
	\mbox{and} 
	\quad
	\lambda (n - m) \le n
$$
imply that any global weak solution of~\eqref{E2.1.1} is trivial.
It can be shown that these conditions are the best possible~\cite{KS, MPbook}.
\end{Example}

\begin{Example}\label{E2.2}
Let us examine the critical exponent $\lambda = 1$ in the right-hand side of~\eqref{E2.1.1}.
Namely, consider the inequality
\begin{equation}
	\sum_{|\alpha| = m}
	\partial^\alpha
	a_\alpha (x, u)
	\ge
	c_0 |u| \ln^\nu (2 + |u|)
	\quad
	\mbox{in } {\mathbb R}^n.
	\quad
	c_0 = const > 0.
	\label{E2.2.1}
\end{equation}
By Theorem~\ref{T2.1}, if
\begin{equation}
	\nu > m,
	\label{E2.2.2}
\end{equation}
then any global weak solution of~\eqref{E2.2.1} is trivial.
At the same time, for all positive even integers $m$ and real numbers $\nu \le m$ and $c_0 > 0$ 
the inequality
$$
	\Delta^{m/2} 
	u 
	\ge 
	c_0 |u| \ln^\nu (2 + |u|)
	\quad
	\mbox{in } {\mathbb R}^n
$$
has a positive infinitely smooth global solution.
As such a solution, one can take
$$
	u (x) 
	= 
	e^{
		e^{
			k (1 + |x|^2)^{1 / 2}
		}
	},
$$
where $k > 0$ is a sufficiently large real number.
Thus, condition~\eqref{E2.2.2} is also the best possible.
\end{Example}

\section{Proof of Theorems~\ref{T2.1}--\ref{T2.4}}\label{proof}

In this section, by $C$ we denote various positive constants 
that can depend only on $A$, $m$, and $n$. 

\begin{Lemma}\label{L3.1}
Let $u$ be a global weak solution of~\eqref{1.1}, then
$$
	\int_{
		B_{r_2}
		\setminus
		B_{r_1}
	}
	|u|
	\,
	dx
	\ge
	C
	(r_2 - r_1)^m
	\int_{
		B_{r_1}
	}
	g (|u|)
	\,
	dx
$$
for all real numbers $0 < r_1 < r_2$ such that $r_2 \le 2 r_1$. 
\end{Lemma}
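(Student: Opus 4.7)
The strategy is the standard test-function method adapted to higher-order operators: plug a radial cutoff into the weak formulation \eqref{1.2}, bound the principal-part side by derivatives of the cutoff, and bound the nonlinear side from below using that the cutoff equals $1$ on $B_{r_1}$.

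More precisely, I would fix a smooth function $\eta \in C^\infty(\mathbb{R})$ with $\eta(t) = 1$ for $t \le 0$, $\eta(t) = 0$ for $t \ge 1$, $0 \le \eta \le 1$, and set
$$
\varphi(x) = \eta\!\left(\frac{|x| - r_1}{r_2 - r_1}\right).
$$
Strictly speaking $\varphi$ should first be smoothed near the origin and near the sphere $\{|x|=0\}$ is not an issue since $r_1>0$ makes $\varphi \equiv 1$ on a neighborhood of $0$, so $\varphi \in C_0^\infty(\mathbb{R}^n)$ with $\varphi \equiv 1$ on $B_{r_1}$, $\supp \varphi \subset \overline{B_{r_2}}$, and $\supp \partial^\alpha \varphi \subset B_{r_2}\setminus B_{r_1}$ for every $|\alpha| = m \ge 1$.

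The key pointwise bound to establish is $|\partial^\alpha \varphi(x)| \le C(r_2-r_1)^{-m}$ on $B_{r_2}\setminus B_{r_1}$. Expanding by the Faà di Bruno / chain rule, $\partial^\alpha \varphi$ is a sum of terms of the form $\eta^{(k)}((|x|-r_1)/(r_2-r_1))(r_2-r_1)^{-k}$ times a product of derivatives of $|x|$, and the latter are bounded by $C|x|^{k-m}$ for $x\ne 0$. On the annulus one has $|x|\ge r_1$, and here is precisely where the hypothesis $r_2 \le 2 r_1$ enters: it gives $r_2 - r_1 \le r_1 \le |x|$, so $|x|^{k-m}(r_2-r_1)^{-k} \le (r_2-r_1)^{-m}$ uniformly in $k=0,\dots,m$.

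With $\varphi$ in hand, the plan is to substitute it into \eqref{1.2}. The left-hand side is majorized by
$$
\sum_{|\alpha|=m}\int_{B_{r_2}\setminus B_{r_1}} |a_\alpha(x,u)|\,|\partial^\alpha \varphi|\,dx \;\le\; \frac{CA}{(r_2-r_1)^m}\int_{B_{r_2}\setminus B_{r_1}} |u|\,dx,
$$
using the structural estimate $|a_\alpha(x,\zeta)| \le A|\zeta|$ and the derivative bound above. The right-hand side is minorized, since $\varphi \equiv 1$ on $B_{r_1}$ and $g(|u|)\varphi \ge 0$ elsewhere, by $\int_{B_{r_1}} g(|u|)\,dx$. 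Rearranging yields exactly the claimed inequality.

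The only nontrivial technical step is the derivative estimate for the radial cutoff, which is where $r_2 \le 2 r_1$ does its work; everything else is bookkeeping within the weak formulation.
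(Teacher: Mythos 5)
Your proposal is correct and follows essentially the same route as the paper: test \eqref{1.2} with a radial cutoff equal to $1$ on $B_{r_1}$ and supported in $B_{r_2}$, bound the principal part by $C(r_2-r_1)^{-m}\int_{B_{r_2}\setminus B_{r_1}}|u|\,dx$ via $|a_\alpha(x,\zeta)|\le A|\zeta|$, and minorize the right-hand side by $\int_{B_{r_1}}g(|u|)\,dx$. The only difference is that you spell out the chain-rule estimate for $\partial^\alpha\varphi$ (where $r_2\le 2r_1$ is used), which the paper states without detail.
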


\begin{proof}
Take a non-negative function $\varphi_0 \in C^\infty ({\mathbb R})$ satisfying the conditions
$$
	\left.
		\varphi_0
	\right|_{
		(- \infty, 0]
	}
	=
	0
	\quad
	\mbox{and}
	\quad
	\left.
		\varphi_0
	\right|_{
		[1, \infty)
	}
	=
	1.
$$
Putting
$$
	\varphi (x)
	=
	\varphi_0
	\left(
		\frac{r_2 - |x|}{r_2 - r_1}
	\right)
$$
as a test function in~\eqref{1.2}, we obtain
$$
	\int_{
		{\mathbb R}^n
	}
	\sum_{|\alpha| = m}
	(- 1)^m
	a_\alpha (x, u)
	\partial^\alpha
	\varphi_0
	\left(
		\frac{r_2 - |x|}{r_2 - r_1}
	\right)
	dx
	\ge
	\int_{
		{\mathbb R}^n
	}
	g (|u|)
	\varphi_0
	\left(
		\frac{r_2 - |x|}{r_2 - r_1}
	\right)
	dx.
$$
Combining this with the estimates
$$
	\left|
		\int_{
			{\mathbb R}^n
		}
		\sum_{|\alpha| = m}
		(- 1)^m
		a_\alpha (x, u)
		\partial^\alpha
		\varphi_0
		\left(
			\frac{r_2 - |x|}{r_2 - r_1}
		\right)
		dx
	\right|
	\le
	\frac{
		C
	}{
		(r_2 - r_1)^m
	}
	\int_{
		B_{r_2}
		\setminus
		B_{r_1}
	}
	|u|
	\,
	dx
$$
and
$$
	\int_{
		B_{r_1}
	}
	g (|u|)
	\,
	dx
	\le
	\int_{
		{\mathbb R}^n
	}
	g (|u|)
	\varphi_0
	\left(
		\frac{r_2 - |x|}{r_2 - r_1}
	\right)
	dx,
$$
we complete the proof.
\end{proof}

\begin{Lemma}\label{L3.2}
Let $u$ be a global weak solution of~\eqref{1.1} and 
$r \le r_1 < r_2 \le 2 r$ 
be positive real numbers.
Then
$$
	J_r (r_2) - J_r (r_1)
	\ge
	C 
	(r_2 - r_1)^m
	g (J_r (r_1)),
$$
where
\begin{equation}
	J_r (\rho)
	=
	\frac{
		1
	}{
		|B_{2 r}|
	}
	\int_{
		B_\rho
	}
	|u|
	\,
	dx.
	\label{L3.2.2}
\end{equation}
\end{Lemma}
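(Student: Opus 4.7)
The plan is to apply Lemma \ref{L3.1} with the given pair $(r_1,r_2)$ and then convert the resulting integral inequality into a statement about $J_r$ by using the convexity of $g$ via Jensen's inequality. The hypotheses match: since $r \le r_1 < r_2 \le 2r$ we have $r_2 \le 2r \le 2r_1$, so Lemma \ref{L3.1} yields
$$
	\int_{B_{r_2}\setminus B_{r_1}} |u|\,dx
	\ge C (r_2-r_1)^m \int_{B_{r_1}} g(|u|)\,dx.
$$
Dividing by $|B_{2r}|$, the left-hand side is exactly $J_r(r_2)-J_r(r_1)$, so it suffices to bound $\int_{B_{r_1}} g(|u|)\,dx$ from below by a constant multiple of $|B_{2r}|\,g(J_r(r_1))$.

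For this lower bound I would use Jensen's inequality for the convex function $g$ on the ball $B_{r_1}$:
$$
	\frac{1}{|B_{r_1}|}\int_{B_{r_1}} g(|u|)\,dx
	\ge
	g\!\left(\frac{1}{|B_{r_1}|}\int_{B_{r_1}} |u|\,dx\right).
$$
The average inside $g$ equals $\frac{|B_{2r}|}{|B_{r_1}|}\,J_r(r_1)$, and since $r_1 \le 2r$ this factor is at least $1$, so by monotonicity of $g$ the right-hand side is at least $g(J_r(r_1))$. Multiplying back and using $r_1 \ge r$ gives $|B_{r_1}| \ge |B_r| = 2^{-n} |B_{2r}|$, hence
$$
	\int_{B_{r_1}} g(|u|)\,dx
	\ge |B_{r_1}|\, g(J_r(r_1))
	\ge 2^{-n}\, |B_{2r}|\, g(J_r(r_1)).
$$
Substituting this into the inequality from Lemma \ref{L3.1} and dividing by $|B_{2r}|$ produces the claimed estimate with a constant $C$ depending only on $A$, $m$, and $n$ (the factor $2^{-n}$ is absorbed into $C$).

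There is no real obstacle here; the only point that needs care is checking that the assumption $r \le r_1 < r_2 \le 2r$ is compatible with the requirement $r_2 \le 2 r_1$ in Lemma \ref{L3.1}, and that the change of normalization from the intrinsic average over $B_{r_1}$ (natural for Jensen) to the fixed normalization $|B_{2r}|$ (used in the definition of $J_r$) does not reverse any inequality. Both are handled simply because $r \le r_1 \le r_2 \le 2r$, which simultaneously controls the ratios $|B_{2r}|/|B_{r_1}|$ and $|B_{r_1}|/|B_{2r}|$ by dimension-only constants.
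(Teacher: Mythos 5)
Your proposal is correct and follows essentially the same route as the paper: apply Lemma \ref{L3.1} (noting $r_2 \le 2r \le 2r_1$), then use Jensen's inequality for the convex $g$ on $B_{r_1}$ together with the monotonicity of $g$ and the comparison $|B_{r_1}| \ge 2^{-n}|B_{2r}|$. No issues.
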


\begin{proof}
By Lemma~\ref{L3.1}, we have
$$
	J_r (r_2) - J_r (r_1)
	\ge
	\frac{
		C 
		(r_2 - r_1)^m
	}{
		|B_{2 r}|
	}
	\int_{
		B_{r_1}
	}
	g (|u|)
	\,
	dx
	\ge
	\frac{
		C 
		(r_2 - r_1)^m
	}{
		2^n 
		|B_{r_1}|
	}
	\int_{
		B_{r_1}
	}
	g (|u|)
	\,
	dx.
$$
Thus, to complete the proof it remains to note that
$$
	\frac{
		1
	}{
		|B_{r_1}|
	}
	\int_{
		B_{r_1}
	}
	g (|u|)
	\,
	dx
	\ge
	g 
	\left(
		\frac{
			1
		}{
			|B_{r_1}|
		}
		\int_{
			B_{r_1}
		}
		|u|
		\,
		dx
	\right)
	\ge
	g (J_r (r_1))
$$
since $g$ is a non-decreasing convex function.
\end{proof}

\begin{Lemma}\label{L3.3}
Let $u$ be a global weak solution of~\eqref{1.1} and $r > 0$ be a real number such that
\begin{equation}
		\int_{
			B_r
		}
		|u|
		\,
		dx
		>
		0.
	\label{L3.3.1}
\end{equation}
Then at least one of the following two inequalities is valid:
\begin{equation}
	\int_{
		J_r (r)
	}^{
		J_r (2 r)
	}
	\frac{
		d\zeta
	}{
		g (\zeta / 2)
	}
	\ge
	C r^m,
	\label{L3.3.2}
\end{equation}
\begin{equation}
	\int_{
		J_r (r)
	}^{
		J_r (2 r)
	}
	g^{- 1 / m} (\zeta / 2)
	\zeta^{1 / m - 1}
	\,
	d\zeta
	\ge
	C r,
	\label{L3.3.3}
\end{equation}
where the function $J_r$ is defined by~\eqref{L3.2.2}.
\end{Lemma}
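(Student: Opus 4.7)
The plan is to iterate Lemma~\ref{L3.2} along a dyadic sequence of radii in $[r, 2r]$ on which $J_r$ doubles, estimate each step by a piece of the integral in~\eqref{L3.3.3}, and handle the leftover subinterval separately using~\eqref{L3.3.2}.  Since the full length $r$ of $[r, 2r]$ is distributed between these two contributions, at least one of them must carry at least $r/2$, producing the required alternative.

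Observe first that $J_r$ is continuous and non-decreasing in $\rho$, and that~\eqref{L3.3.1} gives $J_r(r) > 0$.  Let $N \ge 0$ be the largest integer with $2^N J_r (r) \le J_r (2 r)$, and choose $r = \rho_0 \le \rho_1 \le \cdots \le \rho_N \le 2 r$ with $J_r (\rho_k) = 2^k J_r (r)$.  Applying Lemma~\ref{L3.2} with $r_1 = \rho_k$ and $r_2 = \rho_{k+1}$ yields
$$
	J_r (\rho_k)
	= J_r (\rho_{k+1}) - J_r (\rho_k)
	\ge C (\rho_{k+1} - \rho_k)^m g (J_r (\rho_k)),
$$
and hence $\rho_{k+1} - \rho_k \le C (J_r (\rho_k) / g (J_r (\rho_k)))^{1/m}$.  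To convert this into a piece of~\eqref{L3.3.3}, note that for $\zeta \in [J_r (\rho_k), J_r (\rho_{k+1})]$ one has $\zeta / 2 \le J_r (\rho_k)$, so the monotonicity of $g$ together with the exact evaluation $\int_{J_r (\rho_k)}^{J_r (\rho_{k+1})} \zeta^{1/m - 1} \, d\zeta = m (2^{1/m} - 1) J_r (\rho_k)^{1/m}$ gives
$$
	\int_{J_r (\rho_k)}^{J_r (\rho_{k+1})}
	g^{-1/m} (\zeta / 2) \zeta^{1/m - 1}
	\, d\zeta
	\ge C (\rho_{k+1} - \rho_k).
$$
Summing over $k = 0, \ldots, N-1$ and telescoping the integrals produces
$$
	\rho_N - r
	\le C
	\int_{J_r (r)}^{J_r (2 r)}
	g^{-1/m} (\zeta / 2) \zeta^{1/m - 1}
	\, d\zeta.
$$

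For the leftover interval $[\rho_N, 2 r]$, maximality of $N$ forces $J_r (2 r) < 2 J_r (\rho_N)$, so $\zeta / 2 < J_r (\rho_N)$ and $g (\zeta / 2) \le g (J_r (\rho_N))$ on $[J_r (\rho_N), J_r (2 r)]$.  Applying Lemma~\ref{L3.2} with $r_1 = \rho_N$ and $r_2 = 2 r$ then yields
$$
	\int_{J_r (r)}^{J_r (2 r)} \frac{d\zeta}{g (\zeta / 2)}
	\ge \frac{J_r (2 r) - J_r (\rho_N)}{g (J_r (\rho_N))}
	\ge C (2 r - \rho_N)^m,
$$
and adding the two length estimates gives
$$
	r
	\le C_1
	\int_{J_r (r)}^{J_r (2 r)}
	g^{-1/m} (\zeta / 2) \zeta^{1/m - 1}
	\, d\zeta
	+ C_2
	\left(
		\int_{J_r (r)}^{J_r (2 r)}
		\frac{d\zeta}{g (\zeta / 2)}
	\right)^{1/m},
$$
so at least one of the two terms must be $\ge r / 2$, delivering either~\eqref{L3.3.3} or~\eqref{L3.3.2}.

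The main technical point is the telescoping in the middle paragraph, which requires the dyadic construction to be calibrated exactly so that $\zeta / 2 \le J_r (\rho_k)$ holds throughout each subinterval $[J_r (\rho_k), J_r (\rho_{k+1})]$; this is precisely why the factor $\zeta / 2$ (rather than $\zeta$) appears in the integrands of~\eqref{L3.3.2} and~\eqref{L3.3.3}.  Once this alignment is in place, comparison of the discrete sum with the integral is routine.
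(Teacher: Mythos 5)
Your proof is correct and follows essentially the same strategy as the paper: decompose $[r,2r]$ via radii at which $J_r$ doubles, bound each doubling step by a piece of the integral in~\eqref{L3.3.3} through Lemma~\ref{L3.2}, and control the final non-doubling piece by the integral in~\eqref{L3.3.2}. The only cosmetic difference is that you fix the exact dyadic levels $2^k J_r(r)$ and finish by adding the two length contributions and pigeonholing, whereas the paper constructs the radii greedily by a supremum and argues by the dichotomy of whether the last interval has length at least $r/2$.
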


\begin{proof}
Consider a finite sequence of real numbers $\{ r_i \}_{i=0}^l$ constructed as follows.
We take $r_0 = r$. Assume further that $r_i$ is already known.
If $r_i \ge 3 r / 2$, then we put $l = i$ and stop; otherwise we take
$$
	r_{i+1}
	=
	\sup
	\{
		\rho \in [r_i, 2 r] 
		:
		J_r (\rho)
		\le
		2 J_r (r_i)
	\}.
$$
Since $J (r_0) > 0$ and $u \in L_{1, loc} ({\mathbb R}^n)$, this procedure must terminate at a finite step.
In so doing, we obviously have either
\begin{equation}
	r_l = 2 r
	\quad
	\mbox{and}
	\quad
	J_r (r_l) \le 2 J_r (r_{l-1})
	\label{PL3.3.1}
\end{equation}
or
\begin{equation}
	J_r (r_{i+1}) = 2 J_r (r_i),
	\quad
	i = 0, \ldots, l - 1.
	\label{PL3.3.2}
\end{equation}
At first, let~\eqref{PL3.3.1} hold.
By Lemma~\ref{L3.2}, we obtain
$$
	\frac{
		J_r (r_l) - J_r (r_{l-1})
	}{
		g (J_r (r_{l-1}))
	}
	\ge
	C 
	(r_l - r_{l-1})^m.
$$
Since
$$
	\int_{
		J_r (r_{l-1})
	}^{
		J_r (r_l)
	}
	\frac{
		d\zeta
	}{
		g (\zeta / 2)
	}
	\ge
	\frac{
		J_r (r_l) - J_r (r_{l-1})
	}{
		g (J_r (r_{l-1}))
	}
$$
and $r_l - r_{l-1} \ge r / 2$, this yields~\eqref{L3.3.2}.

Now, let~\eqref{PL3.3.2} be valid. 
Lemma~\ref{L3.2} implies that
$$
	\left(
		\frac{
			J_r (r_{i+1}) - J_r (r_i)
		}{
			g (J_r (r_i))
		}
	\right)^{1 / m}
	\ge
	C 
	(r_{i+1} - r_i),
	\quad
	i = 0, \ldots, l - 1.
$$
Combining this with the inequalities
$$
	\int_{
		J_r (r_i)
	}^{
		J_r (r_{i+1})
	}
	g^{- 1 / m} (\zeta / 2)
	\zeta^{1 / m - 1}
	\,
	d\zeta
	\ge
	C
	\left(
		\frac{
			J_r (r_{i+1}) - J_r (r_i)
		}{
			g (J_r (r_i))
		}
	\right)^{1 / m},
	\quad
	i = 0, \ldots, l - 1,
$$
we have
$$
	\int_{
		J_r (r_i)
	}^{
		J_r (r_{i+1})
	}
	g^{- 1 / m} (\zeta / 2)
	\zeta^{1 / m - 1}
	\,
	d\zeta
	\ge
	C 
	(r_{i+1} - r_i),
	\quad
	i = 0, \ldots, l - 1.
$$
Finally, summing the last formula over all $i = 0, \ldots, l - 1$, we conclude that
$$
	\int_{
		J_r (r_0)
	}^{
		J_r (r_l)
	}
	g^{- 1 / m} (\zeta / 2)
	\zeta^{1 / m - 1}
	\,
	d\zeta
	\ge
	C 
	(r_l - r_0).
$$
This implies~\eqref{L3.3.3}.
\end{proof}

We need the following assertion proved in~\cite[Lemma~2.3]{meIzv}.

\begin{Lemma}\label{L3.4}
Let $\psi : (0,\infty) \to (0,\infty)$ and $\gamma : (0,\infty) \to (0,\infty)$
be measurable functions satisfying the condition
$$
	\gamma (\zeta)
	\le
	\essinf_{
		(\zeta / \theta, \theta \zeta)
	}
	\psi
$$
with some real number $\theta > 1$ for almost all $\zeta \in (0, \infty)$.
Also assume that 
$0 < \alpha \le 1$,
$M_1 > 0$,
$M_2 > 0$,
and
$\nu > 1$
are some real numbers with
$M_2 \ge \nu M_1$.
Then
$$
	\left(
		\int_{M_1}^{M_2}
		\gamma^{-\alpha} (\zeta)
		\zeta^{\alpha - 1}
		\,
		d \zeta
	\right)^{1 / \alpha}
	\ge
	A
	\int_{M_1}^{M_2}
	\frac{
		d \zeta
	}{
		\psi (\zeta)
	},
$$
where the constant $A > 0$ depends only on $\alpha$, $\nu$, and $\theta$.
\end{Lemma}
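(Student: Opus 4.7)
The plan is to relate the two integrals through a geometric partition of $(M_1,M_2)$, motivated by the fact that the hypothesis $\gamma(\zeta)\le\essinf_{(\zeta/\theta,\theta\zeta)}\psi$ is a multiplicative (logarithmic) locality condition in $\zeta$. Concretely, I would set $N=\lceil \log(M_2/M_1)/\log\theta\rceil$, $\rho=(M_2/M_1)^{1/N}\in(1,\theta]$, and $\zeta_k=M_1\rho^k$ for $0\le k\le N$, so that each sub-interval $I_k=(\zeta_k,\zeta_{k+1})$ has the same geometric ratio $\rho$ and the partition is exact. The proof then reduces to a local comparison on every $I_k$ combined with a telescoping argument based on the elementary superadditivity of $x\mapsto x^{1/\alpha}$ for $1/\alpha\ge 1$.

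For every $\zeta\in I_k$, a direct computation yields $(\zeta_k,\zeta_{k+1})\subset(\zeta/\rho,\rho\zeta)\subset(\zeta/\theta,\theta\zeta)$, so the monotonicity of the essential infimum under set inclusion gives $\gamma(\zeta)\le p_k:=\essinf_{I_k}\psi$, while of course $\psi\ge p_k$ almost everywhere on $I_k$. Setting
$$
X_k:=\int_{I_k}\gamma^{-\alpha}(\zeta)\zeta^{\alpha-1}\,d\zeta\ge\frac{p_k^{-\alpha}}{\alpha}(\zeta_{k+1}^\alpha-\zeta_k^\alpha),\qquad Y_k:=\int_{I_k}\frac{d\zeta}{\psi(\zeta)}\le p_k^{-1}(\zeta_{k+1}-\zeta_k),
$$
and using $\zeta_{k+1}=\rho\zeta_k$, the ratio $X_k^{1/\alpha}/Y_k$ is bounded below by $\alpha^{-1/\alpha}(\rho^\alpha-1)^{1/\alpha}/(\rho-1)$, which in turn is bounded below by a positive constant $A=A(\alpha,\nu,\theta)$ once one checks $\rho\ge\min(\nu,\theta^{1/2})$: if $N=1$ then $\rho=M_2/M_1\ge\nu$, while if $N\ge 2$ then $N-1<\log(M_2/M_1)/\log\theta$ forces $\rho>\theta^{(N-1)/N}\ge\theta^{1/2}$.

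To globalize, I would invoke the elementary inequality $\bigl(\sum_k a_k\bigr)^{1/\alpha}\ge\sum_k a_k^{1/\alpha}$ for non-negative $a_k$ (valid since $1/\alpha\ge 1$, via $(a+b)^p\ge a^p+b^p$ for $p\ge 1$) and telescope:
$$
\left(\int_{M_1}^{M_2}\gamma^{-\alpha}(\zeta)\zeta^{\alpha-1}\,d\zeta\right)^{1/\alpha}=\Bigl(\sum_{k=0}^{N-1}X_k\Bigr)^{1/\alpha}\ge\sum_{k=0}^{N-1}X_k^{1/\alpha}\ge A\sum_{k=0}^{N-1}Y_k=A\int_{M_1}^{M_2}\frac{d\zeta}{\psi(\zeta)}.
$$
The main piece of bookkeeping will be to confirm that $\rho$ stays bounded below away from $1$ uniformly in $M_1,M_2$ in a way that depends only on $\alpha,\nu,\theta$; this is precisely why both the multiplicative width $\theta$ of the locality hypothesis and the lower bound $M_2\ge\nu M_1$ on the length of the interval appear in the statement, and why the final constant must carry this threefold dependence.
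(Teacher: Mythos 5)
Your argument is correct. Note that the paper itself contains no proof of this lemma: it is imported verbatim from \cite[Lemma~2.3]{meIzv}, so there is no internal argument to compare with; what you have produced is a valid self-contained substitute. The key points all check out: for $\zeta\in I_k=(\zeta_k,\zeta_{k+1})$ with $\zeta_{k+1}=\rho\zeta_k$ and $\rho\le\theta$ one indeed has $I_k\subset(\zeta/\rho,\rho\zeta)\subset(\zeta/\theta,\theta\zeta)$, so $\gamma\le p_k:=\essinf_{I_k}\psi$ a.e.\ on $I_k$ (and $p_k>0$, since $\gamma>0$); the local ratio bound $X_k^{1/\alpha}/Y_k\ge\alpha^{-1/\alpha}(\rho^\alpha-1)^{1/\alpha}/(\rho-1)$ follows because the factor $p_k^{-1}\zeta_k$ cancels; the dichotomy $N=1$ versus $N\ge2$ correctly pins $\rho$ into $[\min(\nu,\theta^{1/2}),\theta]$, which is exactly where the hypotheses $M_2\ge\nu M_1$ and the multiplicative width $\theta$ enter, and on that compact range the ratio is bounded below by an explicit constant, e.g.\ $\alpha^{-1/\alpha}\bigl(\min(\nu,\theta^{1/2})^\alpha-1\bigr)^{1/\alpha}(\theta-1)^{-1}$, depending only on $\alpha,\nu,\theta$; and the superadditivity $(\sum_k X_k)^{1/\alpha}\ge\sum_k X_k^{1/\alpha}$ for $1/\alpha\ge1$ legitimately globalizes the estimate. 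Your version has the added benefit over the citation of yielding an explicit admissible constant $A$ and of being entirely elementary, so it could be inserted in place of the reference without any change to the rest of Section~\ref{proof}.
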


\begin{Lemma}\label{L3.5}
Under the hypotheses of Lemma~$\ref{L3.3}$, let~\eqref{T2.1.1} be valid. Then
\begin{equation}
	\int_{
		J_r (r)
	}^\infty
	g^{- 1 / m} (\zeta / 4)
	\zeta^{1 / m - 1}
	\,
	d\zeta
	\ge
	C r,
	\label{L3.5.1}
\end{equation}
where the function $J_r$ is defined by~\eqref{L3.2.2}.
\end{Lemma}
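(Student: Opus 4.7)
The plan is to bootstrap the dichotomy supplied by Lemma~\ref{L3.3} into the single estimate~\eqref{L3.5.1}. Since $g$ is non-decreasing, we have $g^{-1/m}(\zeta/4) \ge g^{-1/m}(\zeta/2)$, so if alternative~\eqref{L3.3.3} is the one that holds, enlarging the interval of integration from $[J_r(r), J_r(2r)]$ to $[J_r(r), \infty)$ and replacing $g(\zeta/2)$ by the pointwise smaller $g(\zeta/4)$ immediately yields~\eqref{L3.5.1}. This disposes of one branch without any further work.

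The substantive case is~\eqref{L3.3.2}, and the idea is to convert $\int d\zeta/g(\zeta/2)$ into the shape appearing in~\eqref{L3.5.1} via Lemma~\ref{L3.4}. I would set $\alpha = 1/m$, $\theta = 2$, $\psi(\zeta) = g(\zeta/2)$, and $\gamma(\zeta) = g(\zeta/4)$. Since $g$ is non-decreasing, one checks that
$$
\essinf_{\xi \in (\zeta/2,\, 2\zeta)} g(\xi/2) \ge g(\zeta/4) = \gamma(\zeta),
$$
so the hypothesis of Lemma~\ref{L3.4} is satisfied. Applying the lemma on $[M_1, M_2]$ with $M_1 = J_r(r)$ and any finite $M_2 \ge 2 M_1$, then letting $M_2 \to \infty$ by monotone convergence (both sides are non-decreasing in $M_2$), one arrives at
$$
\left( \int_{J_r(r)}^\infty g^{-1/m}(\zeta/4)\, \zeta^{1/m - 1}\, d\zeta \right)^m \ge C \int_{J_r(r)}^\infty \frac{d\zeta}{g(\zeta/2)}.
$$
Lower-bounding the right-hand side by $\int_{J_r(r)}^{J_r(2r)} d\zeta/g(\zeta/2) \ge C r^m$, which is precisely~\eqref{L3.3.2}, and extracting the $m$-th root produces~\eqref{L3.5.1}.

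The only real obstacle is technical: one must verify that the pair $(\psi, \gamma)$ chosen for Lemma~\ref{L3.4} meets its essential-infimum condition, and that the finite upper-limit constraint $M_2 \ge \nu M_1$ does not obstruct passage to $M_2 = \infty$. Both issues resolve at once, since the integrands are non-negative and monotonicity of $g$ places the essinf at the left endpoint $\zeta/2$ of the interval $(\zeta/2, 2\zeta)$, giving exactly $g(\zeta/4)$. With these observations in hand the proof is a short chain of substitutions into the two lemmas already at our disposal.
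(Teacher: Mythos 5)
Your proposal is correct and follows essentially the same route as the paper: the case~\eqref{L3.3.3} is handled by enlarging the integration interval and using $g^{-1/m}(\zeta/4)\ge g^{-1/m}(\zeta/2)$, while the case~\eqref{L3.3.2} is converted via Lemma~\ref{L3.4} (with $\alpha=1/m$, $\theta=2$, $\psi(\zeta)=g(\zeta/2)$, $\gamma(\zeta)=g(\zeta/4)$) into the desired bound. Your explicit verification of the essential-infimum hypothesis and the passage $M_2\to\infty$ merely spells out details the paper leaves implicit.
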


\begin{proof}
In view of Lemma~\ref{L3.3}, at least one of inequalities~\eqref{L3.3.2}, \eqref{L3.3.3} holds.
In the case where~\eqref{L3.3.3} holds, we obviously have
$$
	\int_{
		J_r (r)
	}^\infty
	g^{- 1 / m} (\zeta / 2)
	\zeta^{1 / m - 1}
	\,
	d\zeta
	\ge
	C r,
$$
whence~\eqref{L3.5.1} follows at once. 
Now, let~\eqref{L3.3.2} be valid.
Lemma~\ref{L3.4} yields
$$
	\left(
		\int_{
			J_r (r)
		}^\infty
		g^{- 1 / m} (\zeta / 4)
		\zeta^{1 / m - 1}
		\,
		d\zeta
	\right)^m
	\ge
	C
	\int_{
		J_r (r)
	}^\infty
	\frac{
		d\zeta
	}{
		g (\zeta / 2)
	}.
$$
Combining this with~\eqref{L3.3.2}, we again obtain~\eqref{L3.5.1}.
\end{proof}

\begin{proof}[Proof of Theorem~$\ref{T2.3}$]
Assume that~\eqref{T2.3.1} holds and, moreover, $u$ is a global weak solution of~\eqref{1.1}.
Since $g$ is a a non-decreasing convex function, we have $g (0) > 0$.
This means that for all $r > 0$ inequality~\eqref{L3.3.1} is valid. 
Really, if $g (0) > 0$, then in accordance with~\eqref{1.2} 
a global weak solution of~\eqref{1.1} can not vanish on a non-empty open set. 
Therefore, in view of Lemma~\ref{L3.5}, for all $r > 0$ estimate~\eqref{L3.5.1} is valid. 
Thus, passing in~\eqref{L3.5.1} to the limit as $r \to \infty$, we arrive at a contradiction.
\end{proof}

\begin{proof}[Proof of Theorem~$\ref{T2.4}$]
Let $r > 0$ be a real number and $u$ be a global weak solution of~\eqref{1.1}.
If $u = 0$ almost everywhere in $B_r$, then~\eqref{T2.4.1} is obvious; 
otherwise~\eqref{L3.3.1} holds and estimate~\eqref{T2.4.1} 
follows from inequality~\eqref{L3.5.1} of Lemma~\ref{L3.5}.
\end{proof}

\begin{proof}[Proof of Theorem~$\ref{T2.1}$]
Let $u$ be a global weak solution of~\eqref{1.1}. 
In view of Theorem~\ref{T2.3}, relation~\eqref{R2.1.1} is valid. 
Thus, we have  $g (0) = 0$ and $G^{-1} (r) \to 0$ as $r \to \infty$. 
In so doing, $G$ is an one-to-one continuous map of the open interval $(0, \infty)$ onto itself and $g$ is an one-to-one continuous map of the closed interval $[0, \infty)$ onto itself.

Lemma~\ref{L3.1} with $r_1 = r / 2$ and $r_2 = r$ yields
\begin{equation}
	\frac{1}{r^m}
	\int_{
		B_r
		\setminus
		B_{r/2}
	}
	|u|
	\,
	dx
	\ge
	C
	\int_{
		B_{r/2}
	}
	g (|u|)
	\,
	dx
	\label{PT2.1.1}
\end{equation}
for all real numbers $r > 0$. 
By Theorem~\ref{T2.4}, this implies the estimate
\begin{equation}
	\int_{
		B_{r/2}
	}
	g (|u|)
	\,
	dx
	\le
	C r^{n - m} G^{-1} (k r)
	\label{PT2.1.2}
\end{equation}
for all real numbers $r > 0$. 
In the case of $n \le m$, passing in~\eqref{PT2.1.2} to the limit as $r \to \infty$,
we obviously obtain $u = 0$ almost everywhere in ${\mathbb R}^n$.
Consequently, we can further assume that $n > m$.

Condition~\eqref{T2.2.1} is equivalent to
$$
	\liminf_{r \to \infty} r^{n - m} G^{-1} (r) < \infty,
$$
whence in accordance with~\eqref{PT2.1.2} it follows that
$$
	\int_{
		{\mathbb R}^n
	}
	g (|u|)
	\,
	dx
	<
	\infty;
$$
therefore,
\begin{equation}
	\int_{
		B_r
		\setminus
		B_{r/2}
	}
	g (|u|)
	\,
	dx
	\to
	0
	\quad
	\mbox{as } r \to \infty.
	\label{PT2.1.3}
\end{equation}
Since $g$ is a convex function, we have
$$
	\frac{
		1
	}{
		\mes (B_r \setminus B_{r/2})
	}
	\int_{
		B_r
		\setminus
		B_{r/2}
	}
	g (|u|)
	\,
	dx
	\ge
	g
	\left(
		\frac{
			1
		}{
			\mes (B_r \setminus B_{r/2})
		}
		\int_{
			B_r
			\setminus
			B_{r/2}
		}
		|u|
		\,
		dx
	\right)
$$
or, in other words,
$$
	\mes (B_r \setminus B_{r/2})
	g^{-1}
	\left(
		\frac{
			1
		}{
			\mes (B_r \setminus B_{r/2})	
		}
		\int_{
			B_r
			\setminus
			B_{r/2}
		}
		g (|u|)
		\,
		dx
	\right)
	\ge
	\int_{
		B_r
		\setminus
		B_{r/2}
	}
	|u|
	\,
	dx
$$
for all real numbers $r > 0$, where $g^{-1}$ is the inverse function to $g$.
By~\eqref{PT2.1.1}, this implies the inequality
$$
	\frac{
		\mes (B_r \setminus B_{r/2})	
	}{
		r^m
	}
	g^{-1}
	\left(
		\frac{
			1
		}{
			\mes (B_r \setminus B_{r/2})	
		}
		\int_{
			B_r
			\setminus
			B_{r/2}
		}
		g (|u|)
		\,
		dx
	\right)
	\ge
	C
	\int_{
		B_{r/2}
	}
	g (|u|)
	\,
	dx
$$
for all real numbers $r > 0$, whence it follows that
\begin{equation}
	\left(
		\int_{
			B_r
			\setminus
			B_{r/2}
		}
		g (|u|)
		\,
		dx
	\right)^{n - m}
	g^{m - n} (f (r)) f^n (r)
	\ge
	C
	\left(
		\int_{
			B_{r/2}
		}
		g (|u|)
		\,
		dx
	\right)^n
	\label{PT2.1.4}
\end{equation}
for all real numbers $r > 0$, where
$$
	f (r)
	=
	g^{-1}
	\left(
		\frac{
			1
		}{
			\mes (B_r \setminus B_{r/2})	
		}
		\int_{
			B_r
			\setminus
			B_{r/2}
		}
		g (|u|)
		\,
		dx
	\right).
$$
Let us note that $f$ is a continuous function and, moreover, $f (r) \to 0$ as $r \to \infty$.
In so doing, since
$$
	G (t)
	\ge
	\int_t^{2 t}
	g^{- 1 / m} (\zeta)
	\zeta^{1 / m - 1}
	\,
	d\zeta
	\ge
	2^{1 / m - 1}
	g^{- 1 / m} (2 t)
	t^{1 / m}
$$
for all $t > 0$, condition~\eqref{T2.2.1} implies the relation
$$
	\liminf_{t \to +0}
	g^{m - n} (t)
	t^n 
	<
	\infty
$$
from which it follows that
\begin{equation}
	\liminf_{r \to \infty}
	g^{m - n} (f(r))
	f^n (r) 
	<
	\infty.
	\label{PT2.1.5}
\end{equation}

Taking into account~\eqref{PT2.1.4}, we obtain
$$
	\liminf_{r \to \infty}
	\left(
		\int_{
			B_r
			\setminus
			B_{r/2}
		}
		g (|u|)
		\,
		dx
	\right)^{n - m}
	g^{m - n} (f (r)) f^n (r)
	\ge
	C
	\left(
		\int_{
			{\mathbb R}^n
		}
		g (|u|)
		\,
		dx
	\right)^n.
$$
In view of~\eqref{PT2.1.3} and~\eqref{PT2.1.5}, 
the limit in the left-hand side of the last expression is equal to zero.
Thus, $u = 0$ almost everywhere in ${\mathbb R}^n$.
\end{proof}

\end{document}